\documentclass[reqno]{amsart}
\usepackage{hyperref}

\AtBeginDocument{{\noindent\small
\emph{}}
\vspace{8mm}}

\begin{document}
\title[]
{On a class of quasilinear elliptic equation with indefinite weights on graphs}

\author[S. Man]{Shoudong Man}
\address{Shoudong Man \\ Department of Mathematics \\ Tianjin University of Finance and Economics \\ Tianjin 300222, P. R. China}
\email{manshoudong@163.com; shoudongmantj@tjufe.edu.cn}

\author[G. Zhang]{Guoqing Zhang}
\address{Guoqing Zhang \\ College of Sciences \\ University of Shanghai for Science and Technology \\ Shanghai, 200093, P. R.  China}
\email{shzhangguoqing@126.com}
\thanks{The first author is supported by the National Natural Science Foundation of China (Grant No. 11601368)}
\thanks{The second author is supported by the National Natural Science Foundation of China (Grant No.11771291)}

\dedicatory{}

\subjclass[2010]{34B15, 35A15, 58E30}
\keywords{indefinite weights, quasilinear elliptic equation on graphs, eigenvalue problem on graphs.}

\begin{abstract}
~~Suppose that $G=(V, E)$ is a connected locally finite graph with the vertex set $V$ and the edge set $E$. Let $\Omega\subset V$  be a bounded domain.
Consider the following quasilinear elliptic equation
on graph $G$
 $$
\left \{
\begin{array}{lcr}
 -\Delta_{p}u= \lambda K(x)|u|^{p-2}u+
  f(x,u), \ \  x\in\Omega^{\circ},\\
  u=0, \ \ x\in\partial \Omega,
 \end{array}
\right.
$$
where $\Omega^{\circ}$ and $\partial \Omega$
denote the interior and the boundary of $\Omega$ respectively, $\Delta_{p}$ is the discrete $p$-Laplacian, $K(x)$ is a given function
 which may change sign, $\lambda$ is the eigenvalue parameter and $f(x,u)$ has exponential growth.
We prove the existence and monotonicity of the principal eigenvalue of the corresponding eigenvalue problem.
Furthermore,  we also obtain the existence of a positive solution by using variational methods.
\end{abstract}

\maketitle
\numberwithin{equation}{section}
\newtheorem{theorem}{Theorem}[section]
\newtheorem{lemma}[theorem]{Lemma}
\newtheorem{remark}[theorem]{Remark}
\allowdisplaybreaks

\section{Introduction}

In this paper, we consider the following quasilinear elliptic equation with indefinite weights on graph $G$
 $$
\left \{
\begin{array}{lcr}
 -\Delta_{p}u= \lambda K(x)|u|^{p-2}u+
  f(x,u), \ \  x\in\Omega^{\circ},\\
  u=0, \ \ x\in\partial \Omega,
 \end{array}
\right.\eqno{(1.1)}
$$
where $G=(V, E)$ is a locally finite graph, $\Omega\subset V$ is a bounded domain,
$\Omega^{\circ}$ and $\partial \Omega$
denote the interior and the boundary of $\Omega$ respectively, $\Delta_{p}$ denotes the discrete $p$-Laplacian, $\lambda$ is the eigenvalue parameter and
 $K(x)$ is a given function which satisfies
 $$K^{+}(x)\not\equiv 0,~ K\in L^{1}(\Omega),~K^{\pm}(x)=\max\{\pm K(x),0\}.\eqno{(1.2)}$$

 Quasilinear elliptic equations have been studied extensively on Euclidean domain and Riemannian manifold.
 Zhang and Liu \cite{liu} investigated the following critical elliptic equations with
indefinite weights
$$\left \{
\begin{array}{lcr}
-\Delta u-\mu \frac{u}{(|x|\ln\frac{R}{|x|})^{2}}= \lambda K(x)u+
  f(x,u),   x \in \Omega,\\
  u=0,  x\in\partial \Omega,
\end{array}
\right.\eqno{(1.3)}
$$
where $K(x)$ satisfies (1.2), and proved the existence of a nontrivial
solution by using the Mountain Pass Lemma. As for the $p$-Laplacian, in particular, Fan and Li \cite{Fan} discussed
$$\left \{
\begin{array}{lcr}
-\triangle_{p}u=\lambda |u|^{p-2}u+f(x,u),
x\in\Omega,\\
u=0,  x\in\partial\Omega,
\end{array}
\right.\eqno{(1.4)}
$$
where $0<\lambda<\lambda_{2}$  and $\lambda_{2}$ is the second
eigenvalue of the $p$-Laplacian with indefinite weights, and obtained the
existence of a nontrivial solution for the problem (1.4).
In \cite{Xuan}, B. Xuan studied the following elliptic equation
$$\left \{
\begin{array}{lcr}
-\triangle_{p}u=\lambda V|u|^{p-2}u+f(x,u),
x\in\Omega,\\
u=0,  x\in\partial\Omega,
\end{array}
\right.\eqno{(1.5)}
$$
where $p>1$, $\Omega\subset R^{N}$ is a bounded domain, and $V(x)$ is a given function satisfying
$$V^{+} \not\equiv 0 \ \ and \ \ V\in L^{s}(\Omega). $$
B. Xuan obtained the existence of a nontrivial weak solution for the problem (1.5) in the case of
$0 <\lambda< \lambda_{1}$ by the Mountain Pass Lemma and in the case of $\lambda_{1} \leq \lambda < \lambda_{2}$ by the Linking
Argument Theorem respectively.
In \cite{Lancelott}, M. Degiovanni and S. Lancelotti studied the problem (1.5)
when $V\in L^{\infty}(\Omega)$ and $f(x,u)$ is subcritical and
superlinear at $0$ and at infinity respectively, and they proved a nontrivial
solution for the problem (1.5) for any $\lambda\in \mathbb{R}$ .

Most recently, the investigation of discrete weighted Laplacians and various equations on graphs have attracted much attention
 [4-9, 13-15].
 A. Grigor'yan, Y. Lin and Y. Y. Yang \cite{AGY} studied the following Yamabe type equation
  $$
\left \{
\begin{array}{lcr}
 -\Delta_{p}u+ h(x)|u|^{p-2}u= f(x,u),  \ \  x\in\Omega^{\circ},\\
  u\geq0,  \ \  x\in\Omega^{\circ}, u=0 \ \ x\in\partial \Omega,
 \end{array}
\right. \eqno{(1.6)}
$$
 and obtained the existence of a positive solution if $h(x)>0$.
Ge \cite{HUABIN} studied the following $p$-th Yamabe equation
$$ -\Delta_{p}u+ h(x)u^{p-1}= \lambda g u^{\alpha-1},  \ \  x\in\Omega^{\circ}, \eqno{(1.7)}$$
where $\alpha\geq p>1$, and proved the existence of a positive solution.
Zhang and Lin \cite{XA} proved the problem (1.7) has at least a positive solution as $2<\alpha\leq p$.

In this paper, we study the quasilinear elliptic equation (1.1) on graph $G$,
which can be viewed as a discrete version of the equation (1.5) studied by B. Xuan, etc.
Firstly, we consider the following eigenvalue problem with indefinite weights on graph $G$
 $$
\left \{
\begin{array}{lcr}
 -\Delta_{p}u= \lambda K(x)|u|^{p-2}u, \ \ x\in\Omega^{\circ},\\
 u=0, \ \ x\in\partial \Omega,
 \end{array}
\right. \eqno{(1.8)}
$$
where $K(x)$ satisfies the assumption (1.2), and obtain the existence and monotonicity of the principal eigenvalue.
Secondly, using  Mountain Pass Lemma and  Sobolev embedding theorem on graph $G$,
 we  obtain the existence of a positive solution of the problem (1.1) as the nonlinear term $f(x,s)$ has exponential growth as $s\rightarrow +\infty$.

This paper is organized as follows. In Section 2, we introduce some notations and lemmas on graph $G$ , and state our main results.
 In Section 3, we prove our main theorems.

\section{Preliminaries and main results}
Let $G = (V, E)$ be a graph. The degree of vertex $x$, denoted by $\mu(x)$, is the number of edges connected to $x$.
 If $\mu(x)$ is finite for every vertex $x$ of $V$, we say that $G$ is a locally finite graph.
We denote $x\sim y$ if vertex $x$ is adjacent to vertex $y$, and $\omega_{xy}=\omega_{yx}>0$ is the edge weight.
The finite measure $\mu(x)=\sum_{y\sim x}\omega_{xy}$. The boundary of $\Omega$ is defined as
$\partial\Omega=\{y\bar{\in}\Omega:\exists x\in \Omega ~such ~that ~xy\in E \}$ and the interior of $\Omega$ is denoted
by $\Omega^{\circ}=\Omega\setminus \partial\Omega$.
A graph $G$ is called connected if for any vertices $x,y\in V$, there exists $\{x_{i}\}_{i=0}^{n}$ that satisfies
$x=x_{0}\sim x_{1}\sim x_{2}\sim \cdot\cdot\cdot \sim x_{n}=y$. In this paper, we suppose that all graphs are connected.

  From \cite{AGY}, for any function $u:\Omega\rightarrow \mathbb{R}$, the $\mu$-Laplacian of $u$ is defined as
$$\Delta u(x)=\frac{1}{\mu(x)}\sum_{y\sim x}\omega_{xy}[u(y)-u(x)].\eqno{(2.1)}$$
The associated gradient form reads
$$\Gamma(u,v)(x)=\frac{1}{2}\{\Delta(u(x)v(x))-u(x)\Delta v(x)-v(x)\Delta u(x)\} \eqno{(2.2)}$$
$$~~~~~~~~~~~=\frac{1}{2\mu(x)}\sum_{y\sim x} \omega_{xy}(u(y)-u(x))(v(y)-v(x)).\eqno{(2.3)}$$
The length of the gradient for $u$ is
$$|\nabla u|(x)=\sqrt{\Gamma(u,u)(x)}=(\frac{1}{2\mu(x)}\sum_{y\sim x} \omega_{xy}(u(y)-u(x))^{2})^{1/2}.\eqno{(2.4)} $$
For any function $u: \Omega\rightarrow \mathbb{R}$, we denote
$$\int_{\Omega}ud\mu=\sum_{x\in\Omega}\mu(x)u(x),\eqno{(2.5)}$$
and set
$$Vol(G)=\int_{\Omega}d\mu.\eqno{(2.6)}$$
The $p$-Laplacian of $u:\Omega\rightarrow \mathbb{R}$, namely $\Delta_{p}u$, is defined in the distributional sense as
$$\int_{\Omega}(\Delta_{p}u)\phi d\mu=-\int_{\Omega}|\nabla u|^{p-2}\Gamma(u,\phi)d\mu, ~~~~\forall \phi\in C_{c}(\Omega), \eqno{(2.7)}$$
where $C_{c}(\Omega)$ is the set of all functions with compact support. So, $\Delta_{p}u$ can be written as
$$\Delta_{p} u(x)=\frac{1}{2\mu(x)}\sum_{y\sim x}\omega_{xy}(|\nabla u|^{p-2}(y)+|\nabla u|^{p-2}(x))(u(y)-u(x)).\eqno{(2.8)}$$

For any $p>1$, $W^{1,p}(\Omega)$ is defined as a space of all functions $u: \Omega\rightarrow \mathbb{R}$ satisfying
$$||u||_{W^{1,p}(\Omega)}=(\int_{\Omega}|\nabla u|^{p}d\mu+\int_{\Omega}| u|^{p}d\mu)^{1/p}<\infty. \eqno{(2.9)}$$
Denote $C_{0}^{1}(\Omega)$ as a set of all functions $u: \Omega\rightarrow \mathbb{R}$ with $u=0$ on $\partial\Omega$,
and $W_{0}^{1,p}(\Omega)$ as
the completion of $C_{0}^{1}(\Omega)$  under the norm (2.9).

\begin{lemma}(\cite{AGY} Theorem $7$)   \label{y2}
Let $G =(V,E)$ be a locally finite graph and $\Omega$ be a bounded domain of  $V$ such that $\Omega^{0}\neq \emptyset$. For any $p > 1$,
 $W_{0}^{1,p}(\Omega)$ is embedded in $ L^{q}(\Omega)$ for all
$1\leq  q \leq +\infty$. In particular, there exists a constant $C$ depending only on $p$ and $\Omega$
 such that
$$(\int_{\Omega}|u|^{q}d\mu)^{1/q}\leq C(\int_{\Omega}|\nabla u|^{p}d\mu)^{1/p}, \eqno{(2.10)}$$
for all $1\leq q \leq +\infty$ and  $u\in W_{0}^{1,p}(\Omega)$. Moreover, $W_{0}^{1,p}(\Omega)$ is pre-compact, namely, if $\{u_{k}\}$ is
bounded in $W_{0}^{1,p}(\Omega)$, then up to a subsequence, there exists some $u\in W_{0}^{1,p}(\Omega)$
 such that $u_{k}\rightarrow u$ in $ W_{0}^{1,p}(\Omega)$.
\end{lemma}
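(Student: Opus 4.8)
The plan is to reduce the whole statement to finite‑dimensional linear algebra, exploiting the fact that a bounded domain of a locally finite graph consists of only finitely many vertices. First I would observe that, $G$ being locally finite, each vertex of $\Omega$ has finitely many neighbours, and, $\Omega$ being bounded, $\Omega$ is a finite set; hence $\partial\Omega$, and therefore $\Omega\cup\partial\Omega$, is finite. A function $u\in C_0^1(\Omega)$ is then completely determined by its finitely many values on $\Omega^{\circ}$ (it vanishes on $\partial\Omega$), so $C_0^1(\Omega)$ is a real vector space of dimension $N:=\#\Omega^{\circ}<\infty$. Since every finite‑dimensional normed space is complete, $C_0^1(\Omega)$ equals its own completion, i.e. $W_0^{1,p}(\Omega)=C_0^1(\Omega)$ and $\dim W_0^{1,p}(\Omega)=N$.

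The only genuinely non‑formal step — and, to my mind, the heart of the proof — is to verify that $u\mapsto\left(\int_\Omega|\nabla u|^p\,d\mu\right)^{1/p}$ defines a norm on $W_0^{1,p}(\Omega)$; this is precisely where connectedness of $G$ and the Dirichlet condition come in. Positive homogeneity is immediate from (2.4), and the triangle inequality follows from Minkowski's inequality, applied first in $\ell^2$ to the inner sums in (2.4) and then in $L^p(\Omega,\mu)$ to the outer sum over $x$. For positive definiteness, suppose $\int_\Omega|\nabla u|^p\,d\mu=0$. Then $|\nabla u|(x)=0$ for every $x\in\Omega^{\circ}$, whence, by (2.4), $u(x)=u(y)$ whenever $x\in\Omega^{\circ}$ and $y\sim x$. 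Given any $x_0\in\Omega^{\circ}$, follow a path in $G$ issuing from $x_0$ until it first leaves $\Omega$ (possible since $\Omega$ is a proper subset of $V$): this yields $x_0\sim x_1\sim\cdots\sim x_{m-1}\sim x_m$ with $x_0,\dots,x_{m-1}\in\Omega^{\circ}$ and $x_m\in\partial\Omega$, the exit vertex lying in $\partial\Omega$ by the very definition of the boundary. Propagating the equalities $u(x_i)=u(x_{i+1})$ along the path gives $u(x_0)=u(x_m)=0$; as $x_0$ was arbitrary, $u\equiv0$. (Note that this step depends only on $\Gamma(u,u)$ vanishing, so it is insensitive to the value of $p$.)

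Granting this, everything else is automatic. On the $N$‑dimensional space $W_0^{1,p}(\Omega)$ all norms are equivalent; since $u\mapsto\|u\|_{L^q(\Omega)}$ is a norm on $W_0^{1,p}(\Omega)$ for each $q\in[1,+\infty]$ (for $q=+\infty$ it is $\max_{x\in\Omega}|u(x)|$), there is for each such $q$ a constant $C_q$ with $\|u\|_{L^q(\Omega)}\le C_q\,\|\nabla u\|_{L^p(\Omega)}$ for all $u\in W_0^{1,p}(\Omega)$. Moreover $\|u\|_{L^q(\Omega)}\le Vol(G)^{1/q}\,\|u\|_{L^\infty(\Omega)}\le\max\{1,Vol(G)\}\,\|u\|_{L^\infty(\Omega)}$ for every $q<\infty$, so $C:=\max\{1,Vol(G)\}\,C_\infty$ works simultaneously for all $q\in[1,+\infty]$ and depends only on $p$ and $\Omega$; this is (2.10), and the continuous embedding $W_0^{1,p}(\Omega)\hookrightarrow L^q(\Omega)$ then follows from (2.10) together with $\|\nabla u\|_{L^p(\Omega)}\le\|u\|_{W^{1,p}(\Omega)}$. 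Finally, the pre‑compactness is the Heine–Borel theorem: a sequence $\{u_k\}$ bounded in the finite‑dimensional normed space $W_0^{1,p}(\Omega)$ has a subsequence converging in $W_0^{1,p}(\Omega)$, and its limit lies in $W_0^{1,p}(\Omega)$ since a finite‑dimensional space is closed. Thus the only real obstacle is the positive‑definiteness argument of the second paragraph; once the finite‑dimensionality is recognized, the embedding, the inequality, and the compactness are all soft.
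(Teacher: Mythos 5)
Your proof is correct and follows essentially the standard route: the paper itself does not prove this lemma but quotes it as Theorem 7 of \cite{AGY}, whose argument likewise rests on the finiteness of a bounded domain in a connected locally finite graph, a path-to-the-boundary (telescoping) argument showing that $\bigl(\int_{\Omega}|\nabla u|^{p}\,d\mu\bigr)^{1/p}$ is a genuine norm on $W_{0}^{1,p}(\Omega)$, and then norm equivalence plus Heine--Borel in finite dimensions. The one caveat you already flag --- that positive definiteness needs $\partial\Omega\neq\emptyset$, i.e.\ $\Omega\neq V$ --- is implicit in the cited statement as well, so there is no gap.
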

By Lemma 2.1, we obtain that $W_{0}^{1,p}(\Omega)$ is a Banach space.
\begin{lemma}(\cite{YLL}Mountain pass lemma) \label{y1}
Let $(X, ||\cdot ||)$ be a Banach space, $J \in C^{1}(X,\mathbb{R})$, $e \in X$
and $r > 0$ such that $||e|| > r$ and $b=\inf_{||u||=r}J(u)>J(0)>J(e)$.
If J satisfies the $(PS)_{c}$ condition with
 $c=\inf_{\gamma\in\Gamma}\max_{t\in[0,1]}J(\gamma(t))$,
where $\Gamma=\{\gamma\in C([0,1], X): \gamma(0)=0,\gamma(1)=e\}$,
then c is a critical value of J.
\end{lemma}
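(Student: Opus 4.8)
The plan is to prove Lemma~\ref{y1} by the classical contradiction-plus-deformation argument. First I would verify that the minimax value $c$ is a well-defined real number lying strictly above the endpoint values: the straight segment $\gamma_{0}(t)=te$ belongs to $\Gamma$, so $c\le\max_{t\in[0,1]}J(te)<\infty$; conversely, for every $\gamma\in\Gamma$ the map $t\mapsto\|\gamma(t)\|$ is continuous with $\|\gamma(0)\|=0<r<\|e\|=\|\gamma(1)\|$, so it equals $r$ at some $t_{\gamma}$, whence $\max_{t}J(\gamma(t))\ge J(\gamma(t_{\gamma}))\ge b$. Taking the infimum over $\Gamma$ yields $c\ge b>J(0)\ge J(e)$.

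Next I would argue by contradiction: assume $J'(u)\neq 0$ at every $u$ with $J(u)=c$. Using the $(PS)_{c}$ condition one first shows that this forces a uniform bound $\|J'(u)\|\ge\alpha>0$ on a band $J^{-1}([c-\varepsilon_{0},c+\varepsilon_{0}])$ for a suitable $\varepsilon_{0}>0$ --- otherwise a sequence with $J\to c$ and $J'\to 0$ would, by $(PS)_{c}$, have a subsequence converging to a critical point at level $c$, a contradiction. Shrinking $\varepsilon_{0}$ so that also $c-\varepsilon_{0}>J(0)\ge J(e)$, the quantitative deformation lemma then provides an $\varepsilon\in(0,\varepsilon_{0})$ and a continuous deformation $\eta\colon[0,1]\times X\to X$ with $\eta(1,\{J\le c+\varepsilon\})\subset\{J\le c-\varepsilon\}$ and $\eta(t,u)=u$ whenever $J(u)\notin[c-\varepsilon_{0},c+\varepsilon_{0}]$; in particular $\eta(1,\cdot)$ fixes both $0$ and $e$.

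Finally, pick $\gamma\in\Gamma$ with $\max_{t}J(\gamma(t))\le c+\varepsilon$, which is possible since $c$ is an infimum, and set $\tilde\gamma:=\eta(1,\gamma(\cdot))$. Then $\tilde\gamma$ is continuous, $\tilde\gamma(0)=\eta(1,0)=0$ and $\tilde\gamma(1)=\eta(1,e)=e$, so $\tilde\gamma\in\Gamma$, yet $\max_{t}J(\tilde\gamma(t))\le c-\varepsilon<c$, contradicting the definition of $c$. Hence $c$ must be a critical value of $J$.

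The real work is concentrated in the deformation lemma, and that is where I expect the main difficulty. Since $X$ is merely a Banach space, one cannot flow along $-J'$ directly; instead one constructs a locally Lipschitz pseudo-gradient vector field $W$ on $\{J'\neq 0\}$ with $\|W(u)\|\le 2\|J'(u)\|$ and $\langle J'(u),W(u)\rangle\ge\|J'(u)\|^{2}$, localizes it near the band by a Lipschitz cut-off, integrates the resulting ODE to obtain a global flow, and invokes the lower bound $\|J'\|\ge\alpha$ (furnished by $(PS)_{c}$) to see that the flow pushes $\{J\le c+\varepsilon\}$ down into $\{J\le c-\varepsilon\}$ in unit time. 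As this lemma is quoted from \cite{YLL}, one may also simply cite it; an alternative, pseudo-gradient-free route runs through Ekeland's variational principle applied to $\Phi(\gamma)=\max_{t}J(\gamma(t))$ on the complete metric space $\Gamma$.
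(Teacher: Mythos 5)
The paper offers no proof of this lemma at all: it is quoted as a known result from Ambrosetti--Rabinowitz \cite{YLL}, so there is nothing in the paper to compare against line by line. Your outline is the standard deformation-lemma proof of that classical theorem and it is correct; in particular you handle the two points that genuinely need checking, namely that every path in $\Gamma$ must cross the sphere $\{\|u\|=r\}$ (giving $c\ge b>J(0)>J(e)$, so $c$ is a genuine minimax level separated from the endpoint values), and that the band $J^{-1}([c-\varepsilon_{0},c+\varepsilon_{0}])$ can be chosen to avoid $J(0)$ and $J(e)$ so that the deformed path remains admissible. Your remarks on deriving the uniform lower bound $\|J'\|\ge\alpha$ on the band from the $(PS)_{c}$ condition, and on the need for a pseudo-gradient field since $X$ is only a Banach space, are exactly the content of the argument in the cited reference; citing \cite{YLL} as the paper does, or writing out the deformation argument as you propose, are both acceptable.
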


  For the well-know Trudinger-Moser inequality on Euclidean domain and complete Riemannian manifold \cite{MOSER, tang}, by Lemma \ref{y2}, we have
\begin{lemma}\label{moser}(Trudinger-Moser inequality on locally finite graphs)
Suppose that $G=(V,E)$ is a locally finite graph. Let $\Omega\subset V$  be a bounded domain.
Then there exists a
constant $C$ which depends only on $p$ and $\Omega$ such that
$$
\sup_{||u||_{W_{0}^{1,p}(\Omega)}\leq 1}\int_{\Omega}exp(\alpha|u|^{\frac{p}{p-1}})d\mu\leq C|\Omega|\ \ for\ any \ \ \alpha>1 \ and \ p>2, \eqno{(2.11)}
$$
where $|\Omega|=\int_{\Omega}d\mu(x)=$ Vol $\Omega$,
and Vol $\Omega$ denotes the volume of the subgraph $\Omega$.
\end{lemma}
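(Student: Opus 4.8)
The plan is to deduce (2.11) directly from the Sobolev embedding of Lemma~\ref{y2}, exploiting the crucial fact that a bounded domain $\Omega$ on a locally finite graph is a \emph{finite} set of vertices, so that the integral in (2.11) is just a finite sum and everything collapses to a pointwise estimate. First I would dispose of the trivial case: if $\Omega^{\circ}=\emptyset$, then $\partial\Omega=\Omega$ and $W_{0}^{1,p}(\Omega)=\{0\}$, so the left-hand side of (2.11) equals $\int_{\Omega}d\mu=|\Omega|$ and the inequality holds with $C=1$. Hence I may assume $\Omega^{\circ}\neq\emptyset$ and apply Lemma~\ref{y2}. Taking $q=p$ in (2.10) yields a Poincar\'e-type inequality $\|u\|_{L^{p}(\Omega)}\le C\|\nabla u\|_{L^{p}(\Omega)}$ on $W_{0}^{1,p}(\Omega)$, so the norm $\|\cdot\|_{W_{0}^{1,p}(\Omega)}$ is equivalent to $\|\nabla\cdot\|_{L^{p}(\Omega)}$ there; in particular $\|u\|_{W_{0}^{1,p}(\Omega)}\le 1$ forces $\|\nabla u\|_{L^{p}(\Omega)}\le 1$.

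Next I would invoke the endpoint case $q=\infty$ of (2.10): there is a constant $C_{0}=C_{0}(p,\Omega)$ with
$$
\max_{x\in\Omega}|u(x)|=\|u\|_{L^{\infty}(\Omega)}\le C_{0}\Big(\int_{\Omega}|\nabla u|^{p}\,d\mu\Big)^{1/p}\le C_{0}
$$
for every $u$ with $\|u\|_{W_{0}^{1,p}(\Omega)}\le 1$. Consequently $|u(x)|^{p/(p-1)}\le C_{0}^{p/(p-1)}$ for each $x\in\Omega$, hence $\exp\big(\alpha|u(x)|^{p/(p-1)}\big)\le\exp\big(\alpha C_{0}^{p/(p-1)}\big)$ pointwise. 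Summing this bound over the finitely many vertices of $\Omega$ and using the definition (2.5) of the integral gives
$$
\int_{\Omega}\exp\big(\alpha|u|^{p/(p-1)}\big)\,d\mu=\sum_{x\in\Omega}\mu(x)\exp\big(\alpha|u(x)|^{p/(p-1)}\big)\le\exp\big(\alpha C_{0}^{p/(p-1)}\big)\sum_{x\in\Omega}\mu(x)=\exp\big(\alpha C_{0}^{p/(p-1)}\big)\,|\Omega|,
$$
and taking the supremum over $\|u\|_{W_{0}^{1,p}(\Omega)}\le 1$ establishes (2.11) with $C=\exp\big(\alpha C_{0}^{p/(p-1)}\big)$.

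There is no genuine analytic obstacle here: unlike the classical Trudinger--Moser inequality on Euclidean domains or Riemannian manifolds, the discrete version is elementary precisely because $W_{0}^{1,p}(\Omega)\hookrightarrow L^{\infty}(\Omega)$ continuously and the exponential ``integral'' is a finite sum, so no sharp constant or concentration analysis is needed. The only point worth flagging is the dependence of the constant: the bound produced above, $\exp\big(\alpha C_{0}^{p/(p-1)}\big)$, genuinely involves $\alpha$, so strictly one should write $C=C(\alpha,p,\Omega)$, or else regard $\alpha>1$ as fixed in advance, in which case $C$ depends only on $p$ and $\Omega$ as stated.
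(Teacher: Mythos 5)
Your proof is correct and follows essentially the same route as the paper: both reduce the claim to an $L^{\infty}$ bound via the Sobolev embedding of Lemma~\ref{y2} (the paper passes through the $L^{p/(p-1)}$ estimate and divides by $\mu_{\min}$, you invoke the $q=\infty$ endpoint directly) and then bound the exponential pointwise and sum over the finitely many vertices. Your closing remark is apt: the constant genuinely depends on $\alpha$, and the same is true of the paper's own constant $C=\bigl(\exp(\alpha C_{0}/\mu_{\min})\bigr)^{p/(p-1)}$, so the lemma's claim that $C$ depends only on $p$ and $\Omega$ should be read with $\alpha$ fixed.
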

\begin{proof}
For any function $u$ which satisfies $||u||_{W_{0}^{1,p}(\Omega)}\leq 1$, by Lemma \ref{y2} and $\frac{p}{p-1}>1$, we obtain
that there exists a constant $C_{0}$ such that
$$
(\int_{\Omega}|u|^{\frac{p}{p-1}}d\mu )^{\frac{p-1}{p}}
\leq C_{0}(\int_{\Omega}|\nabla u|^{p}d\mu )^{\frac{1}{p}}
=C_{0}||u||_{W_{0}^{1,p}(\Omega)}
\leq C_{0}. \eqno{(2.12)}
$$
 Denote $\mu_{min}= min_{x\in \Omega}\mu(x)$. Then (2.12) leads to
$$
||u||_{L^{\infty}(\Omega)}\leq \frac{C_{0}}{\mu_{min}}.\eqno{(2.13)}
$$
Thus for any $\alpha>1$ and $p>2$, we have
$$
(\int_{\Omega}exp(\alpha|u|^{\frac{p}{p-1}})d\mu )^{\frac{p-1}{p}}
\leq exp(\frac{\alpha C_{0}}{\mu_{min}})|\Omega|^{\frac{p-1}{p}}.\eqno{(2.14)}
$$
So, we have
$$
\sup_{||u||_{W_{0}^{1,p}(\Omega)}\leq 1}\int_{\Omega}exp(\alpha|u|^{\frac{p}{p-1}})d\mu\leq C|\Omega|,\eqno{(2.15)}
$$
where $C=(exp(\frac{\alpha C_{0}}{\mu_{min}}))^{\frac{p}{p-1}}$.
\end{proof}

In order to find the principal eigenvalue of the problem (1.8), we solve the following minimization problem
$$(P) \ \ minimize \int_{\Omega}|\nabla u|^{p}d\mu,\ \ u\in W_{0}^{1,p}(\Omega) \ and \ \int_{\Omega}K(x)|u|^{p}d\mu=1.  \eqno{(2.16)}$$

Now, we state our main theorems.

 \begin{theorem}\label{T11}
 Under the assumption $(1.2)$, Problem (P) has a solution $e_{1}\geq 0$. Moreover, $e_{1}$ is an eigenvalue of the problem (1.8)
 corresponding to the principal eigenvalue $\lambda_{1}=\int_{\Omega}|\nabla e_{1}|^{p}d\mu$.
\end{theorem}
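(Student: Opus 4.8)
The plan is to solve Problem (P) by the direct method in the calculus of variations: use the Poincar\'e-type inequality (2.10) together with the pre-compactness in Lemma \ref{y2} to produce a minimizer, then extract equation (1.8) from the associated Euler--Lagrange (Lagrange multiplier) condition and identify the multiplier with $\lambda_{1}$. First one checks that the constraint set
$$\mathcal{M}=\Big\{u\in W_{0}^{1,p}(\Omega):\ \int_{\Omega}K(x)|u|^{p}\,d\mu=1\Big\}$$
is nonempty: by (1.2) there is a vertex $x_{0}\in\Omega^{\circ}$ with $K(x_{0})>0$, and if $v$ equals $1$ at $x_{0}$ and $0$ at every other vertex of $\Omega$, then $v\in W_{0}^{1,p}(\Omega)$ with $\int_{\Omega}K|v|^{p}\,d\mu=\mu(x_{0})K(x_{0})>0$, so a suitable positive multiple of $v$ lies in $\mathcal{M}$. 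Hence $\lambda_{1}:=\inf_{u\in\mathcal{M}}\int_{\Omega}|\nabla u|^{p}\,d\mu$ is a well-defined number in $[0,\infty)$.

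Next I would take a minimizing sequence $\{u_{k}\}\subset\mathcal{M}$, so $\int_{\Omega}|\nabla u_{k}|^{p}\,d\mu\to\lambda_{1}$. Applying (2.10) with $q=p$ bounds $\int_{\Omega}|u_{k}|^{p}\,d\mu$ by $C^{p}\int_{\Omega}|\nabla u_{k}|^{p}\,d\mu$, so $\{u_{k}\}$ is bounded in $W_{0}^{1,p}(\Omega)$ for the norm (2.9). By the pre-compactness part of Lemma \ref{y2}, a subsequence converges, $u_{k}\to e_{1}$ in $W_{0}^{1,p}(\Omega)$, hence $u_{k}(x)\to e_{1}(x)$ at every vertex and, by (2.10) with $q=+\infty$, also in $L^{\infty}(\Omega)$. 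Therefore $\int_{\Omega}|\nabla e_{1}|^{p}\,d\mu=\lambda_{1}$ and, since $K\in L^{1}(\Omega)$, $\int_{\Omega}K|e_{1}|^{p}\,d\mu=\lim_{k}\int_{\Omega}K|u_{k}|^{p}\,d\mu=1$, so $e_{1}\in\mathcal{M}$ is a minimizer. Because $\big||e_{1}|(y)-|e_{1}|(x)\big|\le|e_{1}(y)-e_{1}(x)|$ whenever $x\sim y$, formula (2.4) gives $|\nabla|e_{1}||\le|\nabla e_{1}|$ pointwise, so $|e_{1}|\in\mathcal{M}$ is again a minimizer; replacing $e_{1}$ by $|e_{1}|$ we may assume $e_{1}\ge0$.

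It then remains to read off equation (1.8). Set $J(u)=\int_{\Omega}|\nabla u|^{p}\,d\mu$ and $\Phi(u)=\int_{\Omega}K|u|^{p}\,d\mu$; both are $C^{1}$ on $W_{0}^{1,p}(\Omega)$ for $p>1$, with
$$J'(u)\phi=p\int_{\Omega}|\nabla u|^{p-2}\Gamma(u,\phi)\,d\mu,\qquad \Phi'(u)\phi=p\int_{\Omega}K|u|^{p-2}u\,\phi\,d\mu .$$
Since $\Phi'(e_{1})e_{1}=p\Phi(e_{1})=p\neq0$, the constraint is non-degenerate at $e_{1}$, so the Lagrange multiplier rule yields $\nu\in\mathbb{R}$ with $J'(e_{1})=\nu\,\Phi'(e_{1})$, i.e.
$$\int_{\Omega}|\nabla e_{1}|^{p-2}\Gamma(e_{1},\phi)\,d\mu=\nu\int_{\Omega}K|e_{1}|^{p-2}e_{1}\,\phi\,d\mu\quad\text{for all }\phi\in W_{0}^{1,p}(\Omega).$$
Taking $\phi=e_{1}$ gives $\lambda_{1}=J(e_{1})=\nu\,\Phi(e_{1})=\nu$, so $\nu=\lambda_{1}$; and by the definition (2.7) of $\Delta_{p}$, the identity above is exactly the weak form of $-\Delta_{p}e_{1}=\lambda_{1}K(x)|e_{1}|^{p-2}e_{1}$ on $\Omega^{\circ}$ with $e_{1}=0$ on $\partial\Omega$. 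Thus $e_{1}$ solves (1.8) with eigenvalue $\lambda_{1}=\int_{\Omega}|\nabla e_{1}|^{p}\,d\mu$; moreover $\lambda_{1}>0$, since $\lambda_{1}=0$ would force $|\nabla e_{1}|\equiv0$, hence $e_{1}$ constant on the connected set $\Omega$ and, by the boundary condition, $e_{1}\equiv0$, contradicting $\Phi(e_{1})=1$.

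The one genuinely delicate point is the compactness of the minimizing sequence: the functional controls only $\int|\nabla u_{k}|^{p}$, so one must first upgrade this to a bound on $\|u_{k}\|_{W_{0}^{1,p}(\Omega)}$ through the Poincar\'e-type inequality (2.10) and only then invoke the pre-compactness of Lemma \ref{y2}. The remaining ingredients — the $C^{1}$ regularity and continuity of $J$ and $\Phi$ (valid for $p>1$ since $a\mapsto|a|^{p}$ is $C^{1}$) and the Lagrange multiplier rule on $W_{0}^{1,p}(\Omega)$ — are routine.
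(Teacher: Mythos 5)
Your proof is correct and follows essentially the same route as the paper: direct minimization of $\int_{\Omega}|\nabla u|^{p}\,d\mu$ on the constraint set, compactness from Lemma \ref{y2}, and the Euler--Lagrange / Lagrange-multiplier identity to recover (1.8) with multiplier $\lambda_{1}$. You fill in several details the paper leaves implicit or garbles (non-emptiness of the constraint set, the bound on the minimizing sequence via (2.10) in place of the paper's displays (3.3)--(3.4), the pointwise inequality $|\nabla|e_{1}||\le|\nabla e_{1}|$, and the identification $\nu=\lambda_{1}$), but the overall strategy is the same.
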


\begin{theorem}\label{T1}
 Let $K_{1}(x)$ and $K_{2}(x)$ be two
weights which satisfy the assumption (1.2). Assume $K_{1}(x)<K_{2}(x)$ for all $x\in \Omega$ and
$\{{x\in\Omega}:K_{1}<K_{2}\}\neq \emptyset$. Then $\lambda_{1}(K_{2})<\lambda_{1}(K_{1})$.
\end{theorem}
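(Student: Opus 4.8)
The plan is to use the variational characterization of the principal eigenvalue supplied by Theorem \ref{T11}: for a weight $K$ satisfying (1.2) one has
$$\lambda_{1}(K)=\min\Big\{\int_{\Omega}|\nabla u|^{p}d\mu:\ u\in W_{0}^{1,p}(\Omega),\ \int_{\Omega}K(x)|u|^{p}d\mu=1\Big\},$$
and the minimum is attained at a nonnegative function. First I would fix a minimizer $e_{1}$ for the weight $K_{1}$, so that $e_{1}\ge 0$, $\int_{\Omega}K_{1}(x)|e_{1}|^{p}d\mu=1$ and $\int_{\Omega}|\nabla e_{1}|^{p}d\mu=\lambda_{1}(K_{1})$. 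Since $e_{1}\in W_{0}^{1,p}(\Omega)\hookrightarrow L^{\infty}(\Omega)$ by Lemma \ref{y2}, and $K_{2}\in L^{1}(\Omega)$, every integral appearing below is finite.

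The core of the argument is the estimate $\delta:=\int_{\Omega}\big(K_{2}(x)-K_{1}(x)\big)|e_{1}|^{p}d\mu>0$. Since $K_{2}-K_{1}\ge 0$ pointwise, clearly $\delta\ge 0$. To get strict positivity, note that $e_{1}\not\equiv 0$, for otherwise the constraint $\int_{\Omega}K_{1}(x)|e_{1}|^{p}d\mu=1$ would fail; hence $e_{1}(x_{0})\ne 0$ at some vertex $x_{0}\in\Omega$, and because $K_{1}(x_{0})<K_{2}(x_{0})$ the corresponding summand $\mu(x_{0})\big(K_{2}(x_{0})-K_{1}(x_{0})\big)|e_{1}(x_{0})|^{p}$ is strictly positive, so $\delta>0$. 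It follows that
$$\int_{\Omega}K_{2}(x)|e_{1}|^{p}d\mu=1+\delta>1.$$

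Next I would normalize $e_{1}$ to make it admissible for Problem (P) with weight $K_{2}$. Put $v:=(1+\delta)^{-1/p}e_{1}\in W_{0}^{1,p}(\Omega)$; then $\int_{\Omega}K_{2}(x)|v|^{p}d\mu=(1+\delta)^{-1}\int_{\Omega}K_{2}(x)|e_{1}|^{p}d\mu=1$, so $v$ is a competitor in the minimization defining $\lambda_{1}(K_{2})$. Using the $p$-homogeneity of $u\mapsto\int_{\Omega}|\nabla u|^{p}d\mu$ we obtain
$$\lambda_{1}(K_{2})\le\int_{\Omega}|\nabla v|^{p}d\mu=(1+\delta)^{-1}\int_{\Omega}|\nabla e_{1}|^{p}d\mu=\frac{\lambda_{1}(K_{1})}{1+\delta}<\lambda_{1}(K_{1}),$$
which is exactly the asserted strict monotonicity $\lambda_{1}(K_{2})<\lambda_{1}(K_{1})$.

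I expect the only genuinely delicate point to be the strict inequality $\delta>0$, that is, ruling out that the principal eigenfunction $e_{1}$ happens to vanish on the whole region where $K_{1}<K_{2}$. Under the hypothesis as stated — $K_{1}<K_{2}$ at every vertex of $\Omega$ — this is immediate once $e_{1}\not\equiv 0$, as above. Should one wish to weaken the hypothesis to $K_{1}\le K_{2}$ with $\{x\in\Omega:K_{1}<K_{2}\}\neq\emptyset$, one would additionally need $e_{1}>0$ on $\Omega^{\circ}$; this can be obtained from a discrete maximum principle or Harnack-type inequality, using that $e_{1}$ solves (1.8) and that $\Omega^{\circ}$ is connected, after which the positivity of $\delta$ follows by restricting the sum to the set $\{K_{1}<K_{2}\}$.
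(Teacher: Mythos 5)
Your proof is correct and follows essentially the same route as the paper: take the principal eigenfunction for $K_{1}$, observe that $\int_{\Omega}K_{2}|e_{1}|^{p}d\mu>\int_{\Omega}K_{1}|e_{1}|^{p}d\mu$, and use the renormalized eigenfunction as a competitor in the variational characterization of $\lambda_{1}(K_{2})$. Your write-up is in fact slightly more careful than the paper's, since you explicitly justify the strict inequality $\delta>0$ via $e_{1}\not\equiv 0$ and correctly flag that the stated hypothesis ($K_{1}<K_{2}$ everywhere) makes the condition $\{K_{1}<K_{2}\}\neq\emptyset$ redundant, whereas the presumably intended weaker hypothesis $K_{1}\le K_{2}$ with strict inequality somewhere would require positivity of the eigenfunction.
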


\begin{theorem}\label{T2} Let $\Omega_{1}$ be a proper bounded open
subset of a bounded domain $\Omega_{2}\subset K$. Then
$\lambda_{1}(\Omega_{2})\leq\lambda_{1}(\Omega_{1})$.
\end{theorem}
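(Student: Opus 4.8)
The plan is to deduce the result from the variational characterization of the principal eigenvalue in Theorem~\ref{T11} by extending an eigenfunction on $\Omega_{1}$ to $\Omega_{2}$ by zero. For a bounded domain $\Omega$ with $K^{+}|_{\Omega}\not\equiv 0$, Theorem~\ref{T11} together with the $p$-homogeneity of $u\mapsto\int_{\Omega}|\nabla u|^{p}d\mu$ and of $u\mapsto\int_{\Omega}K(x)|u|^{p}d\mu$ gives
$$
\lambda_{1}(\Omega)=\inf\Bigl\{\int_{\Omega}|\nabla u|^{p}d\mu:\ u\in W_{0}^{1,p}(\Omega),\ \int_{\Omega}K(x)|u|^{p}d\mu=1\Bigr\},
$$
and this infimum is attained. (If $K^{+}|_{\Omega_{1}}\equiv 0$ the right-hand side of the claim is $+\infty$ and there is nothing to prove, so we may assume $\lambda_{1}(\Omega_{1})$ is a genuine eigenvalue, in particular $\Omega_{1}^{\circ}\neq\emptyset$.)

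First I would let $e_{1}\in W_{0}^{1,p}(\Omega_{1})$ be the minimizer for problem $(P)$ on $\Omega_{1}$ furnished by Theorem~\ref{T11}, so that $\int_{\Omega_{1}}K(x)|e_{1}|^{p}d\mu=1$ and $\int_{\Omega_{1}}|\nabla e_{1}|^{p}d\mu=\lambda_{1}(\Omega_{1})$, and define $\widehat{e}_{1}:\Omega_{2}\to\mathbb{R}$ by $\widehat{e}_{1}=e_{1}$ on $\Omega_{1}$ and $\widehat{e}_{1}=0$ on $\Omega_{2}\setminus\Omega_{1}$. The body of the argument is to check that $\widehat{e}_{1}$ is an admissible competitor for problem $(P)$ on $\Omega_{2}$ with no larger Dirichlet energy, that is,
$$
\widehat{e}_{1}\in W_{0}^{1,p}(\Omega_{2}),\qquad \int_{\Omega_{2}}K(x)|\widehat{e}_{1}|^{p}d\mu=1,\qquad \int_{\Omega_{2}}|\nabla\widehat{e}_{1}|^{p}d\mu=\int_{\Omega_{1}}|\nabla e_{1}|^{p}d\mu.
$$
Granting this, $\lambda_{1}(\Omega_{2})\le\int_{\Omega_{2}}|\nabla\widehat{e}_{1}|^{p}d\mu=\int_{\Omega_{1}}|\nabla e_{1}|^{p}d\mu=\lambda_{1}(\Omega_{1})$, which is the assertion.

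The middle identity is immediate since $\widehat{e}_{1}$ vanishes on $\Omega_{2}\setminus\Omega_{1}$. For membership in $W_{0}^{1,p}(\Omega_{2})$, a vertex $x\in\partial\Omega_{2}$ either lies in $\Omega_{2}\setminus\Omega_{1}$, where $\widehat{e}_{1}(x)=0$ by construction, or lies in $\Omega_{1}$; in the latter case $x$ has a neighbour outside $\Omega_{2}$, hence outside $\Omega_{1}$, so $x\in\partial\Omega_{1}$ and $\widehat{e}_{1}(x)=e_{1}(x)=0$. Thus $\widehat{e}_{1}$ vanishes on $\partial\Omega_{2}$. For the energy identity I would evaluate the gradient length $(2.4)$ vertex by vertex: on $\Omega_{1}$, where $\widehat{e}_{1}$ agrees with $e_{1}$ and with its zero-extension across $\partial\Omega_{1}$, one gets $|\nabla\widehat{e}_{1}|(x)=|\nabla e_{1}|(x)$; on $\Omega_{2}\setminus\Omega_{1}$ one has $\widehat{e}_{1}(x)=0$, and a neighbour $y$ with $\widehat{e}_{1}(y)\neq 0$ would satisfy $y\in\Omega_{1}$ with $e_{1}(y)\neq 0$, hence $y\notin\partial\Omega_{1}$, i.e. $y\in\Omega_{1}^{\circ}$; but then every neighbour of $y$ lies in $\Omega_{1}$, contradicting $y\sim x\notin\Omega_{1}$, so $|\nabla\widehat{e}_{1}|(x)=0$. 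Summing $\mu(x)|\nabla\widehat{e}_{1}|^{p}(x)$ over $\Omega_{2}$ then yields $\int_{\Omega_{1}}|\nabla e_{1}|^{p}d\mu$.

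The one genuinely delicate point is this last vertex-by-vertex bookkeeping near $\partial\Omega_{1}$: it rests precisely on the graph definitions of interior and boundary — that a vertex of $\Omega_{1}^{\circ}$ has no neighbour outside $\Omega_{1}$, and that functions in $W_{0}^{1,p}$ vanish on the boundary — so that extension by zero neither creates new ``edge jumps'' nor violates the boundary condition. Everything else (the variational formula, the homogeneity reduction, the behaviour of the weighted integral) is routine. This argument is in the same spirit as the Rayleigh-quotient comparison behind Theorem~\ref{T1}, and, as there, one could with a little more effort upgrade $\le$ to a strict inequality by invoking positivity of the principal eigenfunction on $\Omega_{2}^{\circ}$; but only $\le$ is claimed here.
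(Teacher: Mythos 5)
Your proof is correct and takes essentially the same route as the paper's: extend the principal eigenfunction on $\Omega_{1}$ by zero to $\Omega_{2}$ and use it as an admissible competitor in the Rayleigh quotient characterization of $\lambda_{1}(\Omega_{2})$. Your vertex-by-vertex verification that the zero extension creates no new gradient contributions is in fact more careful than the paper's, which simply asserts the corresponding energy identity (3.15) without comment.
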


\begin{theorem}\label{T3}
   Let $G = (V, E)$ be a locally finite graph and $\Omega\subset V$ be a bounded domain with $\Omega^{0}\neq \emptyset$.
Let $\lambda_{1}$ be defined as in Theorem $\ref{T11}$. Set $0<\lambda<\lambda_{1}$ and $p>2$. Suppose that
$f: \Omega\times \mathbb{R}\rightarrow \mathbb{R}$ satisfies the following hypotheses:\\
\noindent$(H1)$ For any $x\in \Omega$,  $f(x,t)$ is continuous in $t\in \mathbb{R}$;\\
\noindent$(H2)$ For all $(x,t)\in \Omega\times[0,+\infty), f(x,t)\geq 0$, and $f(x,0)=0$ for all $x\in \Omega$;\\
\noindent$(H3)$ $f(x,t)$ has exponential growth at $+\infty$, that is, for all $\alpha>1$,
$$\lim_{t\rightarrow +\infty}\frac{f(x,t)}{exp(\alpha
|t|^{\frac{p}{p-1}})}=0;\ \eqno{(2.17)}$$
\noindent$(H4)$ For any $x\in \Omega$, there holds $\lim_{t\rightarrow 0+}\frac{f(x,t)}{t^{p-1}}=0$;\\
\noindent$(H5)$ There exists $q>p>2$ and $s_{0}>0$ such that if $s\geq s_{0}$, then there holds\\
~~~\indent $0<qF(x,s)<f(x,s)s$ for any $x\in \Omega$, where $F(x,s)=\int_{0}^{s}f(x,t)dt$.

\noindent Then there exists a positive solution for the problem (1.1).
\end{theorem}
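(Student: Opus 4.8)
The plan is to find the solution of (1.1) as a mountain-pass critical point of a suitable energy functional on $W_0^{1,p}(\Omega)$ --- which is finite-dimensional, since $\Omega$ is bounded and hence $\Omega^{\circ}$ is a finite vertex set --- and then to show this critical point is positive. To force nonnegativity I would truncate: put $f(x,t):=0$ for $t\le 0$, so that $F(x,s)=\int_0^{s}f(x,t)\,dt=0$ for $s\le 0$, which by $(H1)$--$(H2)$ leaves $f$ continuous in $t$, and work with
$$J(u)=\frac1p\int_\Omega|\nabla u|^p\,d\mu-\frac\lambda p\int_\Omega K(x)|u|^p\,d\mu-\int_\Omega F(x,u)\,d\mu,$$
taking $\|u\|^p:=\int_\Omega|\nabla u|^p\,d\mu$ as the (equivalent, by Lemma \ref{y2}) norm on $W_0^{1,p}(\Omega)$. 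Using $p>2$ (so $r\mapsto r^{p/2}$ is $C^1$ on $[0,\infty)$), $(H1)$, and $(H3)$ together with Lemma \ref{moser} --- which make $u\mapsto\int_\Omega F(x,u)\,d\mu$ a well-defined $C^1$ functional --- one gets $J\in C^1(W_0^{1,p}(\Omega),\mathbb{R})$ with $\langle J'(u),\varphi\rangle=\int_\Omega|\nabla u|^{p-2}\Gamma(u,\varphi)\,d\mu-\lambda\int_\Omega K|u|^{p-2}u\varphi\,d\mu-\int_\Omega f(x,u)\varphi\,d\mu$. The quantitative input is the inequality $\int_\Omega|\nabla u|^p\,d\mu\ge\lambda_1\int_\Omega K(x)|u|^p\,d\mu$, valid for every $u\in W_0^{1,p}(\Omega)$ by the variational characterization of $\lambda_1$ in Theorem \ref{T11} (by homogeneity when $\int_\Omega K|u|^p>0$, and trivially otherwise, using $\lambda_1>0$); with $\lambda<\lambda_1$ this yields $\int_\Omega|\nabla u|^p\,d\mu-\lambda\int_\Omega K|u|^p\,d\mu\ge\theta\|u\|^p$, $\theta:=1-\lambda/\lambda_1>0$.

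Next I would verify the hypotheses of Lemma \ref{y1}. One has $J(0)=0$. Near $0$: by $(H4)$ (and finiteness of $\Omega$), for each $\varepsilon>0$ there is $\delta>0$ with $F(x,t)\le\frac\varepsilon p t^p$ for $0\le t\le\delta$; since $\|u\|_{L^\infty(\Omega)}\le C\|u\|$ on the bounded graph (Lemma \ref{y2}), for $\|u\|=r$ small $\int_\Omega F(x,u)\,d\mu\le\frac{\varepsilon C^p}p\|u\|^p$, so $J(u)\ge\frac1p(\theta-\varepsilon C^p)r^p=:b>0$ once $\varepsilon$ is small. For a far point: $(H5)$ gives $F(x,s)\ge a_0 s^q$ for $s\ge s_0$, with $a_0=\min_{x\in\Omega}F(x,s_0)/s_0^q>0$; taking any $\phi\in W_0^{1,p}(\Omega)$ with $\phi\ge0$, $\phi\not\equiv0$ (e.g. $\phi=e_1$) and using $q>p$, one finds $J(t\phi)\le C_1 t^p-C_2 t^q+C_3\to-\infty$ as $t\to+\infty$, so $e:=t_0\phi$ with $t_0$ large satisfies $\|e\|>r$ and $J(e)<0=J(0)$. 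For $(PS)_c$: if $J(u_n)\to c$ and $J'(u_n)\to0$, then from $J(u_n)-\frac1q\langle J'(u_n),u_n\rangle$, using $(H5)$ (the integrand $\frac1q f(x,u_n)u_n-F(x,u_n)$ is $\ge0$ where $u_n\ge s_0$, bounded below where $0\le u_n\le s_0$, and $0$ where $u_n<0$) and the coercivity inequality, one obtains $(\frac1p-\frac1q)\theta\|u_n\|^p\le C+\frac1q\|u_n\|$, hence $\{u_n\}$ is bounded; being bounded in the finite-dimensional $W_0^{1,p}(\Omega)$ it has a subsequence converging strongly (Lemma \ref{y2}) to some $u$, which then satisfies $J'(u)=0$ and $J(u)=c$. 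Lemma \ref{y1} now yields a critical point $u$ of $J$ with $J(u)=c\ge b>0$; in particular $u\not\equiv0$.

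To see $u$ is a positive solution of (1.1), I would first test $J'(u)=0$ with $\varphi=-u^-$, $u^-:=\max\{-u,0\}\in W_0^{1,p}(\Omega)$. The term $\int_\Omega f(x,u)u^-\,d\mu$ vanishes (since $f(x,u)=0$ where $u<0$) and $\int_\Omega K|u|^{p-2}u\,u^-\,d\mu=-\int_\Omega K(x)(u^-)^p\,d\mu$. For the principal part, the $1$-Lipschitz map $t\mapsto\max\{-t,0\}$ gives the pointwise bounds $|\nabla u^-|(x)\le|\nabla u|(x)$ and $(u(y)-u(x))(u^-(y)-u^-(x))\le-(u^-(y)-u^-(x))^2$ for $x\sim y$, whence, using $p>2$ to compare $|\nabla u|^{p-2}$ with $|\nabla u^-|^{p-2}$, $-\int_\Omega|\nabla u|^{p-2}\Gamma(u,u^-)\,d\mu\ge\int_\Omega|\nabla u^-|^p\,d\mu=\|u^-\|^p$. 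Altogether $\|u^-\|^p\le\lambda\int_\Omega K(x)(u^-)^p\,d\mu\le\frac\lambda{\lambda_1}\|u^-\|^p$, which forces $u^-\equiv0$, i.e. $u\ge0$. Then $f(x,u)$ agrees with the original nonlinearity, and testing $J'(u)=0$ with $\delta_x$ for $x\in\Omega^\circ$ gives $-\Delta_p u(x)=\lambda K(x)u(x)^{p-1}+f(x,u(x))$ in $\Omega^\circ$ with $u=0$ on $\partial\Omega$. Finally, if $u(x_0)=0$ at some $x_0\in\Omega^\circ$, the equation and $(H2)$ give $\Delta_p u(x_0)=0$, while by (2.8) $\Delta_p u(x_0)=\frac1{2\mu(x_0)}\sum_{y\sim x_0}\omega_{x_0y}(|\nabla u|^{p-2}(y)+|\nabla u|^{p-2}(x_0))u(y)$ is a sum of nonnegative terms; hence $u(y)=0$ for all $y\sim x_0$, and iterating with the connectedness of $G$ forces $u\equiv0$, a contradiction. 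Thus $u>0$ on $\Omega^\circ$.

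I expect the main obstacle to be the positivity step: selecting $-u^-$ as the test function and proving, entirely within the graph calculus, the two pointwise inequalities that make the discrete $p$-Laplacian term dominate $\|u^-\|^p$ from below, so that $\lambda<\lambda_1$ can close the argument; the accompanying strong-maximum-principle step (upgrading $u\ge0$ to $u>0$ in $\Omega^\circ$) rests on the same careful analysis of $\Delta_p$ at a would-be zero of $u$. The mountain-pass geometry and compactness are comparatively routine here because $W_0^{1,p}(\Omega)$ is finite-dimensional.
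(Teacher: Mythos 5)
Your proposal is correct and follows essentially the same route as the paper: the mountain-pass argument for the functional with truncated nonlinearity $F(x,u^{+})$, the coercivity estimate $\int_{\Omega}|\nabla u|^{p}d\mu-\lambda\int_{\Omega}K|u|^{p}d\mu\geq(1-\lambda/\lambda_{1})\|u\|^{p}$ from the variational characterization of $\lambda_{1}$, the $(PS)_{c}$ verification via $(H5)$, and the test with the negative part combined with $\Gamma(u,u^{-})\leq-|\nabla u^{-}|^{2}$ and $p>2$ to force $u^{-}\equiv0$. You in fact go one step further than the paper by adding the discrete strong-maximum-principle argument upgrading $u\geq0$ to $u>0$ on $\Omega^{\circ}$, which the paper does not carry out.
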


\section{The proof of  main Theorems}

 \indent{ \textbf{The Proof of Theorem 2.4.}}
Now, we introduce the functional $\phi, \psi: W_{0}^{1,p}(\Omega)\rightarrow \mathbb{R}$ defined by
$$\phi(u)=\int_{\Omega}|\nabla u|^{p}d\mu, \ and \   \psi(u)=\int_{\Omega}K(x)|u|^{p}d\mu. \eqno{(3.1)}$$
Let us also introduce the set
$$ M=\{u\in W_{0}^{1,p}(\Omega):  \psi(u)=1\}.\eqno{(3.2)}$$
By the assumption (1.2), we have $M\neq \emptyset$ and $M$ is a manifold of $C^{1}$ in $ W_{0}^{1,p}(\Omega)$.
Obviously the functional $\phi(u) $ is bounded from below. Hence,
let $\{u_{n}\}$ be a minimizing sequence for the problem ($P$)
such that
$$\frac{1}{p}\int_{\Omega}|\nabla u_{k}|^{p}d\mu \leq \lambda_{1} +o_{k}(1)||u_{k}||_{W_{0}^{1,p}(\Omega)}\eqno{(3.3)}$$
$$\int_{\Omega}|\nabla u_{k}|^{p}d\mu= o_{k}(1)||u_{k}||_{W_{0}^{1,p}(\Omega)}.\eqno{(3.4)}$$
Calculate (3.3)$-\frac{1}{\theta} \times$ (3.4), we have
$$(\frac{1}{p}-\frac{1}{\theta})||u_{m}||^{p}_{W_{0}^{1,p}(\Omega)}\leq M,  \eqno{(3.5)}$$
where $\theta> p$ is a constant. This implies that $\{u_{k}\}$ is bounded in $W_{0}^{1,p}(\Omega)$.
Thus by Lemma \ref{y2}, there exists some $e_{1}\in W_{0}^{1,p}(\Omega)$
 such that $u_{n}\rightarrow e_{1}$ in $ W_{0}^{1,p}(\Omega)$ and
 $$\phi(e_{1})=\int_{\Omega}|\nabla e_{1}|^{p}d\mu\leq\liminf _{n\rightarrow\infty}
 \int_{\Omega}|\nabla u_{n}|^{p}d\mu=\liminf _{n\rightarrow\infty} \phi(u_{n})=\inf (P)\eqno{(3.6)}$$
 and $\int_{\Omega}K(x)|e_{1}|^{p}d\mu =1$.
 It is clear that $e_{1}$ is a solution of the problem ($P$). Moreover, since $|e_{1}|$ is also a solution of the problem ($P$),
 we may assume $e_{1}\geq 0$.

 Since for every $v\in C_{c}^{1}(\Omega)$, we have
 $$\frac{d}{d\varepsilon}|_{\varepsilon=0}\frac{\phi(e_{1}+\varepsilon v)}{\psi(e_{1}+\varepsilon v)}=0. \eqno{(3.7)}$$
 So $e_{1}$ is an eigenfunction of the problem ($P$) corresponding to the principal eigenvalue
 $\lambda_{1}=\int_{\Omega}|\nabla e_{1}|^{p}d\mu .$   ~$\Box$\\

 \indent{\textbf{The Proof of Theorem 2.5.}
 By Theorem \ref{T11}, we have
$$\lambda_{1}=\inf_{u\not \equiv 0,u|_{\partial \Omega}=0} \frac{\int_{\Omega}|\nabla u|^{p}d\mu}{\int_{\Omega}K(x)|u|^{p}d\mu}=\inf_{u\in M}\phi(u). \eqno{(3.8)} $$

Let $ u_{1}$ be an eigenfunction associated to
$\lambda_{1}(K_{1})$. So we have
$$\lambda_{1}(K_{1})= \frac{\int_{\Omega}|\nabla u_{1}|^{p}d\mu}{\int_{\Omega}K(x)|u_{1}|^{p}d\mu}. \eqno{(3.9)}$$
Since $K_{1}<K_{2}$ and $\mu(x)>0$ for all $x\in \Omega$, we have\\
$$\int_{\Omega}K_{1}(x)|u_{1}|^{p}d\mu =\sum_{x\in\Omega}K_{1}(x)|u_{1}(x)|^{p}\mu(x)
<\sum_{x\in\Omega}K_{2}(x)|u_{1}(x)|^{p}\mu(x)=\int_{\Omega}K_{2}(x)|u_{1}|^{p}d\mu.\eqno{(3.10)} $$
 Using $u_{2}=\frac{u_{1}}{(\int_{\Omega}K_{2}(x)|u_{1}|^{p}d\mu)^{1/p}}$
 as an admissible function in (3.8) for
 $\lambda_{1}(K_{2})$, we have
$$
\lambda_{1}(K_{2})\leq\frac{\int_{\Omega}(|\nabla u_{2}|^{p}d\mu}{\int_{\Omega}K_{2}(x)|u_{2}|^{p}d\mu}
=\frac{\int_{\Omega}(|\nabla u_{1}|^{p}d\mu}{\int_{\Omega}K_{2}(x)|u_{1}|^{p}d\mu}
<\frac{\int_{\Omega}(|\nabla u_{1}|^{p}d\mu}{\int_{\Omega}K_{1}(x)|u_{1}|^{p}d\mu}
 =\lambda_{1}(K_{1}). \eqno{(3.11)}
$$
Thus we have $\lambda_{1}(K_{2})<\lambda_{1}(K_{1})$. ~$\Box$\\

 \indent\textbf{The Proof of Theorem 2.6.} Let $u\in W_{0}^{1,p}(\Omega_{1})$ be an eigenfunction associated to $\lambda_{1}(\Omega_{1})$, that is ,
$$\lambda_{1}(\Omega_{1})= \frac{\int_{\Omega_{1}}|\nabla u|^{p}d\mu}{\int_{\Omega_{1}}K(x)|u|^{p}d\mu}.\eqno{(3.12)} $$
Let $\tilde{u}\in W_{0}^{1,p}(\Omega_{2})$ satisfy
\[
\tilde{u}=
\begin{cases}
  u ,& x\in \Omega_{1},\\
  0, &x\in \Omega_{2}\setminus \Omega_{1}.
\end{cases} \eqno{(3.13)}
\]
So we have
$$\int_{\Omega_{2}}K|\tilde{u}|^{p}d\mu=\sum_{x\in\Omega_{1}}K(x)|u(x)|^{p}\mu(x)= \int_{\Omega_{1}}K|u|^{p}d\mu, \eqno{(3.14)}$$
$$\int_{\Omega_{2}}(|\nabla \tilde{u}|^{p}d\mu=\int_{\Omega_{1}}(|\nabla \tilde{u}|^{p}d\mu.\eqno{(3.15)}$$
Combining (3.14) and (3.15), and
taking $\tilde{u}/(\int_{\Omega_{2}}K\tilde{u}^{p}d\mu)^{1/p}$ as an
admissible function for $\lambda_{1}(\Omega_{2})$, we have
$$
\lambda_{1}(\Omega_{2})\leq\frac{\int_{\Omega_{2}}(|\nabla
\tilde{u}|^{p}d\mu}{\int_{\Omega_{2}}K(x)|\tilde{u}|^{p}d\mu}
=\frac{\int_{\Omega_{1}}(|\nabla \tilde{u}|^{p}d\mu}{\int_{\Omega_{1}}K(x)|\tilde{u}|^{p}d\mu}
 =\lambda_{1}(\Omega_{1}). \eqno{(3.16)}
$$
Thus, the proof is complete. ~$\Box$\\

 \indent\textbf{The Proof of Theorem 2.7.}
 Now, we define the functional $J:W_{0}^{1,p}(\Omega)\rightarrow \mathbb{R}$ by
$$
J(u)=\frac{1}{p}\int_{\Omega}|\nabla u|^{p}d\mu-\frac{\lambda}{p}\int_{\Omega}K(x)| u|^{p}d\mu-\int_{\Omega}F(x,u^{+})d\mu,\eqno{(3.17)}
$$
where $u^{+}(x)=\max\{u(x),0\}$.
 By the definition of $\lambda_{1}$, for $u\in W_{0}^{1,p}(\Omega)$ we have
$$
\lambda_{1}\int_{\Omega}K(x)|u|^{p}d\mu\leq\int_{\Omega}|\nabla u|^{p}d\mu. \eqno{(3.18)}
$$
Since $0<\lambda<\lambda_{1}$, we have
$$
J(u)\geq\frac{1}{p}(1-\frac{\lambda}{\lambda_{1}})\int_{\Omega}|\nabla u|^{p}d\mu-\int_{\Omega}F(x,u^{+})d\mu,\eqno{(3.19)}
$$

Indeed, from (H4), there exist $\tau,\delta>0$
such that if $|u|\leq \delta$ we have
$$f(x,u^{+})\leq \tau (u^{+})^{p-1}.\eqno{(3.20)}$$
On the other hand, by (H3), there exist $c,\beta$ such that
$$f(x,u^{+})\leq
cexp(\beta|u|^{\frac{p}{p-1}}),\forall|u|\geq\delta.\eqno{(3.21)}$$ Then we
obtain that, for $q>p$ ,
$$F(x,u^{+})\leq
cexp(\beta|u|^{\frac{p}{p-1}})|u|^{q},\forall|u|\geq\delta.\eqno{(3.22)}$$
Combining (3.20) and (3.22), we obtain that
$$F(x,u^{+})\leq \tau\frac{|u|^{p}}{p}+
c exp(\beta|u|^{\frac{p}{p-1}})|u|^{q}.\eqno{(3.23)}$$
 By the H$\ddot{o}$lder inequality, we
have
$$J(u)\geq
\frac{1}{p}(1-\frac{\lambda}{\lambda_{1}})||u||^{p}_{W_{0}^{1,p}(\Omega)}-\frac{\tau}{p}\int_{\Omega}|u|^{p}d\mu-c(\int_{\Omega}exp(\beta
p|u|^{\frac{p}{p-1}})d\mu)^{\frac{1}{p}}(\int_{\Omega}|u|^{qp'}d\mu)^{\frac{1}{p'}},\eqno{(3.24)}$$
where $\frac{1}{p}+\frac{1}{p'}=1$. By Lemma \ref{moser}, when $||u||_{W_{0}^{1,p}(\Omega)}\leq 1$ for any $u\in W_{0}^{1,p}(\Omega)$
we obtain that
$$\int_{\Omega}exp(\beta p|u|^{\frac{p}{p-1}})d\mu<C|\Omega|.\eqno{(3.25)}$$
By Lemma \ref{y2}, there exists some constant $C$ that depends only on $p$ and $\Omega$
such that
$$ (\int_{\Omega}|u|^{p}d\mu)^{1/p}\leq C (\int_{\Omega}|\nabla u|^{p}d\mu)^{1/p}=C||u||_{W_{0}^{1,p}(\Omega)} \eqno{(3.26)}$$
$$ (\int_{\Omega}|u|^{qp'}d\mu)^{1/p'}\leq C (\int_{\Omega}|\nabla u|^{p}d\mu)^{q/p}=C||u||^{q}_{W_{0}^{1,p}(\Omega)} .\eqno{(3.27)}$$
By (3.24), (3.25), (3.26) and (3.27), we can find some sufficiently small $r>0$ such that if $||u||_{W_{0}^{1,p}(\Omega)}=r$ we have
$$J(u)\geq
\frac{1}{p}(1-\frac{\lambda}{\lambda_{1}}-C\tau)||u||^{p}_{W_{0}^{1,p}(\Omega)}-C||u||^{q}_{W_{0}^{1,p}(\Omega)}. \eqno{(3.28)}$$
By (H4), we set $\tau<\frac{1}{C}(1-\frac{\lambda}{\lambda_{1}})$. Since $q>p>2$, we have
$$\inf_{||u||_{W_{0}^{1,p}(\Omega)}=r} J(u)>0. \eqno{(3.29)}$$
By (H5), there exist two positive constants $c_{1}$ and $c_{2}$ such that
$$F(x,u^{+})\geq c_{1}(u^{+})^{q}-c_{2}. \eqno{(3.30)}$$
Take $u_{0}\in W_{0}^{1,p}(\Omega)$ such that $u_{0}\geq 0$ and $u_{0}\not\equiv0$. For any $t>0$, we have
$$J(tu_{0})\leq\frac{t^{p}}{p}||u_{0}||^{p}_{W_{0}^{1,p}(\Omega)}-\frac{t^{p}}{p}\lambda\int_{\Omega}K(x)|u_{0}|^{p}d\mu  -c_{1}t^{q}\int_{\Omega}u_{0}^{q}d\mu+c_{2}|\Omega|.  \eqno{(3.31)}$$
Since $q>p>2$, we have $J(tu_{0})\rightarrow -\infty$ as $t\rightarrow +\infty$. Hence there exists some $u_{1}\in W_{0}^{1,p}(\Omega)$
satisfying
$$J(u_{1})<0, ~~~~~ ||u_{1}||_{W_{0}^{1,p}(\Omega)}>r. \eqno{(3.32)}  $$

Now we prove that $J(u)$ satisfies the $(PS)_{c}$ condition for any $c\in R$. To see this, we assume
$J(u_{k})\rightarrow c$ and $J'(u_{k})\rightarrow 0$ as $k\rightarrow \infty$, that is
$$\frac{1}{p}\int_{\Omega}|\nabla u_{k}|^{p}d\mu -
\frac{1}{p}\lambda\int_{\Omega}K(x)|u_{k}|^{p}d\mu -\int_{\Omega}F(x,u_{k}^{+})d\mu=c+o_{k}(1) \eqno{(3.33)}$$
$$\int_{\Omega}|\nabla u_{k}|^{p}d\mu-\lambda\int_{\Omega}K(x)|u_{k}|^{p}d\mu-\int_{\Omega}u_{k}f(x,u_{k}^{+})d\mu= o_{k}(1)||u_{k}||_{W_{0}^{1,p}(\Omega)}.\eqno{(3.34)}$$
By the definition of $\lambda_{1}$, (3.33) and (H5), we have
$$
\frac{1}{p}(1-\frac{\lambda}{\lambda_{1}})\int_{\Omega}|\nabla u_{k}|^{p}d\mu\leq \int_{\Omega}F(x,u_{k}^{+})d\mu+c+o_{k}(1)
\leq \frac{1}{q}\int_{\Omega}u_{k}f(x,u_{k}^{+})d\mu+c+o_{k}(1). \eqno{(3.35)}
$$
By (3.34) and (3.35), we get
$$
(\frac{1}{p}-\frac{1}{q})(1-\frac{\lambda}{\lambda_{1}})\int_{\Omega}|\nabla u_{k}|^{p}d\mu
\leq C - \frac{1}{q}o_{k}(1)||u_{k}||_{W_{0}^{1,p}(\Omega)}. \eqno{(3.36)} $$
Since $q>p$, we have $||u_{k}||_{W_{0}^{1,p}(\Omega)}\leq M$.
Thus $\{u_{k}\}$ is bounded in $W_{0}^{1,p}(\Omega)$.
Then the $(PS)_{c}$ condition follows by Lemma \ref{y2}.

Combining (3.29), (3.32) and the obvious fact that $J(0)=0$, we conclude by Lemma \ref{y1} that there
exists a function $u\in W_{0}^{1,p}(\Omega)$ such that
$$J(u)=\inf_{\gamma\in\Gamma}\max_{t\in[0,1]}J(\gamma(t))>0 \eqno{(3.37)}$$
and $J'(u)= 0,$
where $\Gamma=\{\gamma\in C([0,1], W_{0}^{1,p}(\Omega)): \gamma(0)=0,\gamma(1)=u_{1}\} $.
Hence there exists a nontrivial solution $u\in W_{0}^{1,p}(\Omega)$ to the equation
 $$
\left \{
\begin{array}{lcr}
 -\Delta_{p}u= \lambda K(x)|u|^{p-2}u+
  f(x,u),   x \in \Omega^{\circ},\\
  u=0,  x\in\partial \Omega.
 \end{array}
\right.
$$

Testing the above equation by $u^{-}=\min\{u,0\}$ and noting that
$$\Gamma(u^{-},u)=\Gamma(u^{-},u^{-})+\Gamma(u^{-},u^{+})\geq |\nabla u^{-}|^{2}, \eqno{(3.38)}$$
since $0<\lambda<\lambda_{1}$, we have
\begin{align}
\int_{\Omega}|u^{-}|^{p}d\mu-\lambda\int_{\Omega}K(x)|u^{-}|^{p}d\mu&\leq
 -\int_{\Omega}u^{-}\Delta_{p}u d\mu-\lambda\int_{\Omega}K(x)u^{-}|u^{-}|^{p-2}ud\mu  \notag  \\
 &= \int_{\Omega}u^{-} f(x,u^{+})d\mu =0.\notag
\end{align}
This implies that $u^{-}\equiv 0$ and thus $u\geq 0$.~$\Box$\\

\end{document}